\documentclass[12pt]{amsart}
\usepackage{amsmath,amsthm,amsfonts,amssymb,latexsym,enumerate}
\textwidth=432pt
\topmargin=14pt
\oddsidemargin=18pt
\evensidemargin=18pt








\newcommand{\cod}{{{\operatorname{cod}}}}
\newcommand{\Irr}{{{\operatorname{Irr}}}}

\newcommand{\dl}{\operatorname{dl}}
\newcommand{\Soc}{\operatorname{Soc}}
\newcommand{\Ker}{\operatorname{Ker}}
\newcommand{\Syl}{\operatorname{Syl}}
\newcommand{\Sol}{\operatorname{Sol}}

\newtheorem{thm}{Theorem}[section]
\newtheorem{lem}[thm]{Lemma}

\newtheorem{cor}[thm]{Corollary}

\newtheorem*{thmA}{Theorem A}
\newtheorem*{conA'}{Conjecture A'}
\newtheorem*{corB}{Corollary B}
\newtheorem*{corC}{Corollary C}

\theoremstyle{definition}

\numberwithin{equation}{section}



\def\sbs{\subseteq}

\begin{document}

\title[Kernels of $p'$-degree irreducible characters]{Kernels of $p'$-degree irreducible characters}

\author{Alexander Moret\'o}
\address{Departamento de Matem\'aticas, Universidad de Valencia}
\email{alexander.moreto@uv.es}
\author{Noelia Rizo}
\address{Departamento de Matem\'aticas, Universidad del Pa\'is Vasco/Euskal Herriko
Unibertsitatea UPV/EHU}
\email{noelia.rizo@ehu.eus}

\thanks{We thank G. Navarro for allowing us to include here his proof of Theorem \ref{B}. Research  supported by Ministerio de Ciencia e Innovaci\'on PID-2019-103854GB-100, FEDER funds  and Generalitat Valenciana AICO/2020/298. The second author is also supported by ``Convocatoria de contrataci\'on para la especializaci\'on de personal investigador doctor en la UPV/EHU (2019)''. Part of this work was done while the first author was visiting the University of the Basque Country. He thanks the Mathematics Department for its hospitality. Finally, we are obliged to the anonymous referee of an earlier version of this paper.}

\keywords{Broline-Garrison theorem, character kernel, character degree, character codegree, $p$-nilpotent group}

\subjclass[2010]{Primary 20C15}

\date{\today}

\begin{abstract} Let $G$ be a finite group and let $p$ be a prime number. We prove that if $\chi\in\Irr_{p'}(G)$  and $\Ker\chi$ does not have a solvable normal $p$-complement then there exists $\psi\in\Irr_{p'}(G)$ such that $\psi(1)>\chi(1)$ and $\Ker\psi<\Ker\chi$. This is a $p'$-version of a classical theorem of Broline and Garrison. As a consequence, we obtain results on $p$-parts of character codegrees.
\end{abstract}

\maketitle


\section{Introduction}

All groups in this paper will be finite. As usual, if $G$ is a group we write $\Irr(G)$ to denote the set of complex irreducible characters of $G$ and $\Irr_{p'}(G)$ to denote the set of irreducible characters of $p'$-degree of $G$.

A classical theorem of Broline and Garrison asserts that if $G$ is a finite group, $\chi\in\Irr(G)$ is a complex irreducible character of $G$ and $\Ker\chi$ is not nilpotent, then there exists $\psi\in\Irr(G)$ such that $\Ker\psi<\Ker\chi$ and $\psi(1)>\chi(1)$. This is Theorem 12.19 of \cite{isa}. (See Theorem 12.24 of \cite{isa} and \cite{ch} for variations of this result.)
The following is out main result. (Given a group $G$, we write $\Sol(G)$ to denote the solvable radical of $G$ and $O_{p',p}(G)$ is the largest solvable $p$-nilpotent normal subgroup of $G$.)

\begin{thmA}
Let $G$ be a finite group and let $p$ be a prime. Let $\chi\in\Irr_{p'}(G)$ and set $K=\Ker\chi$. If $K$ is not solvable or not $p$-nilpotent then there exists $\psi\in\Irr_{p'}(G)$ such that $\Ker\psi<\Ker\chi$ and $\psi(1)>\chi(1)$. In particular, there exists $\mu\in\Irr_{p'}(G)$ such that $\Ker\mu\leq O_{p',p}(\Sol(G))$.
\end{thmA}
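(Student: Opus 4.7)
I will first deduce the ``in particular'' clause from the main statement. Take $\mu\in\Irr_{p'}(G)$ of maximum degree; the main statement forces $\Ker\mu$ to be both solvable and $p$-nilpotent, for otherwise some $\psi\in\Irr_{p'}(G)$ would have $\psi(1)>\mu(1)$. Being solvable and normal, $\Ker\mu\le\Sol(G)$; its characteristic $p$-complement is a normal $p'$-subgroup of $G$ contained in $O_{p'}(\Sol(G))$, and the image of $\Ker\mu$ in $\Sol(G)/O_{p'}(\Sol(G))$ is therefore a normal $p$-subgroup lying in $O_p(\Sol(G)/O_{p'}(\Sol(G)))$. Pulling back yields $\Ker\mu\le O_{p',p}(\Sol(G))$.

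For the main statement, I would argue by induction on $|G|$ via a minimal counterexample $(G,\chi)$, with $K:=\Ker\chi$. A routine inflation reduction shows that $K/M$ is solvable and $p$-nilpotent for every $G$-normal $1<M\le K$: otherwise, induction applied to $G/M$ (with $\chi$ viewed as a character of $G/M$) furnishes $\bar\psi\in\Irr_{p'}(G/M)$ with $\Ker\bar\psi\subsetneq K/M$ and $\bar\psi(1)>\chi(1)$, whose inflation to $G$ contradicts the choice of $(G,\chi)$. Consequently, any minimal normal subgroup $M$ of $G$ contained in $K$ carries, by itself, the obstruction to $K$ being solvable-$p$-nilpotent, and we split into two cases: \emph{Case A}, where $K$ is non-solvable and $M$ is a direct product of isomorphic non-abelian simple groups permuted transitively by $G$; and \emph{Case B}, where $K$ is solvable but not $p$-nilpotent, in which case a short argument excludes $M$ being a $p'$-group (otherwise the $p$-nilpotency of $K/M$ would propagate to $K$), so $M$ is elementary abelian of $p$-power order and a $p$-complement of $K/M$ acts nontrivially on $M$.

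In Case A, I would combine classification-based results on the existence of nontrivial $\Aut(S)$-invariant $p'$-degree characters of each non-abelian simple $S$ (in the spirit of Malle--Navarro) with Clifford theory, to produce $\psi\in\Irr_{p'}(G)$ lying over a nontrivial $G$-invariant $p'$-degree character of $M$; minimality of $M$ forces $\Ker\psi\cap M=1$. In Case B, let $P\in\syl{p}{G}$ contain $M$; since $P$ is a $p$-group acting on the nontrivial $p$-group $M$ we have $C_M(P)\ne 1$, hence $[P,M]<M$, so there exists a nontrivial $P$-fixed $\la\in\Irr(M)$. Then $I_G(\la)\supseteq P$, so $[G:I_G(\la)]$ is coprime to $p$, and Clifford correspondence produces $\psi\in\Irr_{p'}(G)$ lying over $\la$, with $\Ker\psi\cap M=1$.

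The principal difficulty, which I expect to be the most technical part of the proof, is to jointly secure the three properties $p'$-degree, $\Ker\psi\subsetneq K$ (the \emph{subset} containment, not merely $\Ker\psi\ne K$), and $\psi(1)>\chi(1)$. Arranging $\Ker\psi\subseteq K$ in both cases seems to require additional bookkeeping beyond the outline above, probably by passing to $G/C_G(M)$ in Case A and by a refined Clifford analysis inside the stabilizer in Case B. The degree inequality $\psi(1)>\chi(1)$ under the $p'$-degree constraint I expect to follow from a sum-of-squares identity for $\Irr_{p'}$ analogous to the one underlying the classical Broline--Garrison argument, applied to the tightly restricted configuration produced by the inflation reduction, together with deep input on $p'$-degree characters of quasi-simple groups in Case A.
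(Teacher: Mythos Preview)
Your deduction of the ``in particular'' clause and your inflation reduction to a minimal normal $M\le K$ are exactly what the paper does. Case~A is also in the same spirit: the paper invokes a Navarro--Tiep/Hung result giving $\gamma\in\Irr_{p'}(M)$ with $|G:G_\gamma|$ a $p'$-number and $\gamma$ extendible to $G_\gamma$ (not $G$-invariant, as you write), and then uses Clifford theory. The paper's way of forcing $\Ker\psi<K$ is cleaner than your ``pass to $G/C_G(M)$'' suggestion: one writes $\Soc(G)=M\times L$, takes $\beta\in\Irr(L)$ under $\chi$, and builds $\psi$ over $\gamma\times\beta$; uniqueness of $M$ as a minimal normal of $G$ inside $K$ gives $K\cap L=1$, whence $\psi$ is faithful. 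The degree inequality $\psi(1)>\chi(1)$ comes directly from $\psi(1)=|G:T|\gamma(1)\varphi(1)$ with $\gamma(1)>1$ and $\varphi$ under $\chi$; no sum-of-squares identity is needed.

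The genuine gap is Case~B. Your plan (pick a $P$-fixed nontrivial $\lambda\in\Irr(M)$ with $M$ an elementary abelian $p$-group, then Clifford) gives $\Ker\psi\cap M<M$ and $p\nmid|G:I_G(\lambda)|$, but there is no evident mechanism to force either $\Ker\psi\le K$ or $\psi(1)>\chi(1)$: a $p'$-degree character of $I_G(\lambda)$ over $\lambda$ need not have any relation to $\chi$. The hoped-for ``sum-of-squares identity for $\Irr_{p'}$'' is not a known tool, and I do not see how to make this route work. The paper handles this case by an entirely different (and elementary) argument due to Navarro: since $K$ is $p$-solvable but not $p$-nilpotent, a Hall $p$-complement $Q$ of $K$ is not normal in $K$; Frattini gives $G=K\,N_G(Q)$, so a maximal $H\ge N_G(Q)$ has $p$-power index $>1$. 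With $\theta=\chi_H$ one writes $\theta^G=\chi+\Delta$; then $\Delta(1)=\chi(1)(|G:H|-1)$ is a $p'$-number, so $\Delta$ has a $p'$-degree constituent $\psi$ with $K\not\le\Ker\psi$. Maximality of $H$ then forces both $\psi(1)>\chi(1)$ and $\Ker\psi<K$. This bypasses the minimal normal subgroup $M$ altogether and never needs a $p'$-analogue of the Broline--Garrison sum.
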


Unlike the Broline-Garrison theorem, our proof of Theorem A relies on the classification of finite simple groups by means of a  theorem of G. Navarro and P. H. Tiep \cite{nt} on the extendability of characters of a simple group. As an immediate consequence of Theorem A we have the following, which we record here.

 \begin{corB}
 Let $G$ be a finite group and let $p$ be a prime. If $G$ has trivial solvable radical, then there exists a faithful irreducible character of $G$ of $p'$-degree.
 \end{corB}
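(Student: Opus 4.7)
The plan is to deduce Corollary B as an essentially immediate consequence of the ``In particular'' clause of Theorem A. The hypothesis is that $\Sol(G)=1$, so in particular the largest solvable $p$-nilpotent normal subgroup of $\Sol(G)$ is trivial, i.e.\ $O_{p',p}(\Sol(G))=1$. Theorem A then hands us some $\mu\in\Irr_{p'}(G)$ with $\Ker\mu\le O_{p',p}(\Sol(G))=1$, and $\mu$ is therefore the desired faithful $p'$-degree character.

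There is essentially no obstacle: the work is entirely packaged inside Theorem A, and this corollary just evaluates the conclusion in the special case where the solvable radical is trivial. The only sanity checks are notational: that $O_{p',p}(H)=1$ whenever $H=1$, and that $\Ker\mu\le 1$ really does mean $\mu$ is faithful.

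One could alternatively phrase the argument directly in the iterative form. Start with any $\chi_0\in\Irr_{p'}(G)$ (for example the trivial character). If $K_0=\Ker\chi_0$ is not both solvable and $p$-nilpotent, apply Theorem A to obtain $\chi_1\in\Irr_{p'}(G)$ with $\Ker\chi_1<\Ker\chi_0$. Iterate. Since $G$ is finite, the chain must terminate, yielding some $\chi_n\in\Irr_{p'}(G)$ whose kernel $K_n$ is solvable and $p$-nilpotent. Then $K_n\le\Sol(G)=1$, so $\chi_n$ is faithful. Either formulation works, but the first is more efficient because the ``In particular'' statement has already done the iteration for us.
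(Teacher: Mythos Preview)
Your proof is correct and matches the paper's treatment: the paper simply records Corollary~B as ``an immediate consequence of Theorem~A'' without further argument, and your first paragraph spells out exactly that deduction via $O_{p',p}(\Sol(G))=O_{p',p}(1)=1$. The iterative rephrasing you offer is also valid and is essentially how the ``In particular'' clause of Theorem~A is itself obtained.
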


 It is possible to prove Corollary B without using Theorem A. However, we do not know a classification-free proof of Corollary B either.

There is a number of applications of the Broline-Garrison theorem to character degrees. We will present applications of Theorem A to character degrees elsewhere (see \cite{mr}). Our motivation to find Theorem A came from character codegrees. Recall that if $G$ is a group and $\chi\in\Irr(G)$ then the codegree of $\chi$ is
$$\cod(\chi)=\frac{|G:\Ker\chi|}{\chi(1)}.$$
The following consequence of Theorem A resembles known results on $p$-parts of character degrees.

\begin{corC}
Let $G$ be a finite group. Assume that $p^{a+1}$ does not divide $\cod(\chi)$ for every $\chi\in\Irr(G)$. Then
$$
|G:O_{p',p}(\Sol(G))|_p\leq p^a.
$$
\end{corC}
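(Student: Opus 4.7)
The plan is to reduce Corollary C to a one-line calculation using Theorem A. The key observation is that the hypothesis on codegrees controls the $p$-part of $|G:\Ker\chi|$ whenever $\chi$ has $p'$-degree, because in that case $\cod(\chi)_p = |G:\Ker\chi|_p$. So the strategy is to find a single irreducible character of $p'$-degree whose kernel is contained in $N := O_{p',p}(\Sol(G))$, and then read off the bound.

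First, I would apply the ``in particular'' part of Theorem A: there exists $\mu\in\Irr_{p'}(G)$ such that $\Ker\mu\leq N$. Set $K=\Ker\mu$. Since $\mu(1)$ is coprime to $p$, we have
$$\cod(\mu)_p = \left(\frac{|G:K|}{\mu(1)}\right)_{\!p} = |G:K|_p.$$
By hypothesis, $p^{a+1}\nmid\cod(\mu)$, hence $|G:K|_p\leq p^a$.

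Finally, since $K\leq N$, the index $|G:N|$ divides $|G:K|$, and in particular $|G:N|_p$ divides $|G:K|_p$, giving $|G:N|_p\leq p^a$, which is exactly the desired bound.

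There is essentially no obstacle here beyond invoking Theorem A correctly; the proof is a short deduction once one has a $p'$-degree character with small kernel. The only point that needs a moment's thought is the elementary divisibility $|G:N|_p \mid |G:K|_p$, which follows from $|G:K|=|G:N|\cdot|N:K|$.
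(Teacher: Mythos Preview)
Your proof is correct and follows essentially the same approach as the paper: invoke the ``in particular'' clause of Theorem A to obtain $\mu\in\Irr_{p'}(G)$ with $\Ker\mu\leq O_{p',p}(\Sol(G))$, compute $\cod(\mu)_p=|G:\Ker\mu|_p$ since $\mu(1)$ is prime to $p$, and bound using the hypothesis. The paper's proof is the same one-line calculation, just written more compactly as a chain of inequalities.
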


When this paper was in the final stages of its preparation, we learned that related, but weaker,  results were obtained in \cite{qz}. In particular, it was proved in \cite{qz} that if $G$ is a finite group then there exists $\mu\in\Irr_{p'}(G)$ such that $\Ker\mu$ is $p$-nilpotent (but not solvable $p$-nilpotent, as in Theorem A). Our techniques are also different.

\section{Proof of Theorem A}

The proof of Theorem A splits into two parts: the cases when $K$ is $p$-solvable and $K$ is not $p$-solvable. First, we handle the $p$-solvable case. The proof is due to G. Navarro and is included here with his kind permission.

\begin{thm}
\label{B}
Let $p$ be a prime. Let $G$ be a finite group and let $\chi\in\Irr_{p'}(G)$. Write $K=\Ker\chi$. If $K$ is $p$-solvable but not $p$-nilpotent, then there exists $\psi\in\Irr_{p'}(G)$ such that $\Ker\psi<K$ and $\psi(1)>\chi(1)$.
\end{thm}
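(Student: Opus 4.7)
The plan is to induct on $|G|$ and exploit the $p$-solvable structure of $K$.

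First I would reduce to the case $L := O_{p'}(K) = 1$. Since $L \le K = \Ker\chi$, the character $\chi$ descends to $\bar{\chi} \in \Irr_{p'}(G/L)$ with kernel $K/L$; one checks that $K/L$ is still $p$-solvable and not $p$-nilpotent (a normal $p$-complement of $K/L$ would, together with $L$, give a normal $p$-complement of $K$, since $L$ is already a $p'$-group), so the inductive hypothesis applied to $(G/L, \bar{\chi})$ supplies a $\bar{\psi}$ which lifts to $G$.

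Now assume $O_{p'}(K) = 1$ and set $M := O_p(K)$. Then $M \neq 1$, and by the Hall--Higman lemma $\cent{K}{M} \le M$. Since $K$ is not $p$-nilpotent, $K \neq M$; any normal $p$-subgroup of $K/M$ lifts into $M = O_p(K)$, so $O_p(K/M) = 1$, and $p$-solvability forces $O_{p'}(K/M) \neq 1$. Let $N$ be its preimage in $K$. By Schur--Zassenhaus, $N = M \rtimes H$ for a non-trivial $p'$-group $H$, and $\cent{K}{M} \le M$ forces $H$ to act faithfully on $M$. Both $M$ and $N$ are characteristic in $K$, hence normal in $G$.

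Because $H$ acts faithfully and coprimely on the $p$-group $M$, Brauer's permutation lemma provides a non-trivial linear $\lambda \in \Irr(M)$ in a non-trivial $H$-orbit, hence in a non-trivial $G$-orbit. Clifford theory over $M$ then yields irreducible characters $\psi \in \Irr(G \mid \lambda)$ of the form $\psi = \sigma^G$ for $\sigma \in \Irr(T \mid \lambda)$, where $T := I_G(\lambda) \supseteq M$. By replacing $\lambda$ by a suitable $G$-conjugate so that $T$ contains a fixed Sylow $p$-subgroup of $G$ (making $[G:T]$ a $p'$-integer) and choosing $\sigma$ as a $p'$-degree extension coming from a linear character of $T/M$, I would produce $\psi \in \Irr_{p'}(G)$. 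Since $\lambda \neq 1_M$, $\psi$ is non-trivial on $M \le K$, so $\Ker\psi \ne K$, and the degree inequality $\psi(1) > \chi(1)$ will come from the factor $[G:T] > 1$ together with the extension.

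The main obstacle, and the most delicate point, is to ensure $\Ker\psi \subseteq K$; a priori a character lying over $\lambda$ can have its kernel escape $K$ entirely. I expect the resolution to involve a direct comparison with $\chi$ itself, for instance by taking $\psi$ to be a $p'$-degree irreducible constituent of a product $\chi \cdot \tau$ for a carefully chosen $\tau$ over $\lambda$, and using the faithfulness of $\chi$ on $G/K$ to rule out elements of $G \setminus K$ from $\Ker\psi$. Conceptually, the non-$p$-nilpotency of $K$ --- equivalently $N > M$ --- is exactly the structural input that supplies the non-trivial coprime action of $H$ on $M$, and hence the non-trivial $G$-orbit on $\Irr(M)$, which drives the whole construction.
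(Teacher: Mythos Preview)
Your approach is quite different from the paper's, and as written it has real gaps rather than merely unwritten details.

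The most serious problem is the degree inequality. In your construction $\psi=\sigma^G$ with $\sigma\in\Irr(T\mid\lambda)$ linear, so $\psi(1)=|G:T|$. You assert that $\psi(1)>\chi(1)$ ``will come from the factor $[G:T]>1$ together with the extension'', but there is no link whatsoever between $|G:T|$ and $\chi(1)$; the latter can be arbitrarily large compared to any orbit length on $\Irr(M)$. Nothing in your setup ties $\psi$ to $\chi$, so no comparison of degrees is available. (You also tacitly assume $\lambda$ extends to its inertia group $T$; since $M$ is a $p$-group and $T/M$ typically also has non-trivial $p$-part, this extension is not automatic.) Your fallback idea of looking at constituents of $\chi\tau$ does bring $\chi$ into play, but then you lose control both of the degree of an individual constituent and, as you yourself note, of the kernel: a constituent of $\chi\tau$ lying over $\lambda$ certainly fails to contain $M$ in its kernel, but there is no reason its kernel should be contained in $K$.

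The paper (the argument is Navarro's) avoids all of this by working from the \emph{top} of $K$ rather than the bottom. Let $Q$ be a Hall $p$-complement of $K$; since $K$ is not $p$-nilpotent, $N_K(Q)<K$, and the Frattini argument gives a maximal subgroup $H\geq N_G(Q)$ with $|G:H|$ a non-trivial $p$-power and $HK=G$. Then $\theta:=\chi_H$ is irreducible, and one writes $\theta^G=\chi+\Delta$, where $\chi$ is the \emph{unique} constituent with $K$ in its kernel. Now $\Delta(1)=\chi(1)(|G:H|-1)$ is coprime to $p$, so $\Delta$ has a $p'$-degree constituent $\psi$; automatically $K\not\leq\Ker\psi$, automatically $\psi(1)\geq\theta(1)=\chi(1)$, and two short contradiction arguments using the maximality of $H$ upgrade these to $\Ker\psi<K$ and $\psi(1)>\chi(1)$. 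The crucial point you are missing is that inducing $\chi_H$ back up builds $\chi$ into the construction from the start, which is exactly what makes both the degree comparison and the kernel containment tractable.
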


\begin{proof}
Let $Q$ be a Hall $p$-complement of $K$. Since $K$ is not $p$-nilpotent, $N_K(Q)<K$. By Frattini's argument, $G=KN_G(Q)$. Notice that $|G:N_G(Q)|=|K:N_K(Q)|>1$ is a $p$-power.  Let $H$ be a maximal subgroup of $G$ containing $N_G(Q)$. Set $\theta=\chi_H\in\Irr(H)$ and let $J=\Ker\theta=H\cap K$.

Notice that by Lemma 6.8 of \cite{nav} if $\tau\in\Irr(G/K)$ lies over $\theta$, then $\tau=\chi$. Also, $[\chi,\theta^G]=[\chi_H,\theta]=1$ by Frobenius reciprocity. Hence, we may write
$$
\theta^G=\chi+\Delta
$$
where no irreducible constituent $\psi$ of $\Delta$ is such that $K\leq\Ker\psi$. Now,
$$
\theta^G(1)=\chi(1)+\Delta(1),
$$
and therefore, since $\theta^G(1)=|G:H|\theta(1)=|G:H|\chi(1)$, we obtain
$$
\chi(1)(|G:H|-1)=\Delta(1).
$$
Since $\chi(1)$ is a $p'$-number and $|G:H|$ is a power of $p$, we deduce that $\Delta(1)$ is not divisible by $p$. Hence, there exists an irreducible constituent of $p'$-degree $\psi$ of $\theta^G$ such that $K$ is not contained in $\Ker\psi=L$. In particular, $K\neq L$.

We claim that $\psi(1)>\chi(1)$. Since $\psi$ lies over $\theta$, we know that $\psi(1)\geq\theta(1)=\chi(1)$. By way of contradiction, assume that $\psi(1)=\chi(1)$. Then $\psi_H=\theta$ and it follows that $J=H\cap L\leq L$. Thus $J\leq K\cap L\leq K$ and, since $K\cap L\trianglelefteq G$, the maximality of $H$ implies that $K\cap L=J$ or $K\cap L=K$. In the first case, $J\trianglelefteq G$ and Frattini's argument implies that $G=JN_G(Q)=H$, a contradiction.  In the second case, $K\leq L$ which is another contradiction. This proves the claim.

It remains to see that $L<K$. Assume first that $L\leq H$. Since $\theta$ is an irreducible constituent of $\psi_H$, we deduce that
$$L=L\cap H=\Ker\psi_H\leq\Ker\theta=J=H\cap K\leq K.$$
Since $L\neq K$, we conclude that $L<K$, as wanted.

Finally, we may assume that $L\not\leq H$. In particular, $G=HL$, using that $H$ is maximal in $G$. Thus $\psi_H=\theta$ and it follows that $\psi(1)=\theta(1)=\chi(1)$. This contradiction completes the proof.
\end{proof}

If $H\lhd G$ and $\gamma\in\Irr(H)$ we will write $G_{\gamma}$ to denote the stabilizer of $\gamma$ in $G$.
 As mentioned in the Introduction, our proof of the general case depends on the classification of finite simple groups by means of the following result.

\begin{lem}
\label{minsim}
Let $M=S^n$ be a nonabelian minimal normal subgroup of a group $G$. Then there exists a non-trivial irreducible character $\gamma=\alpha\times\cdots\times\alpha\in\Irr(M)$, of $p'$-degree, such that $|G:G_{\gamma}|$ is a $p'$-number and $\gamma$ extends to $G_{\gamma}$.
\end{lem}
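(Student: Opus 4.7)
The plan is to apply the Navarro--Tiep theorem from \cite{nt}: for every nonabelian simple group $S$ and every prime $p$, there exists a nontrivial $\alpha\in\Irr_{p'}(S)$ that extends to $\Aut(S)$. I would then take $\gamma=\alpha\times\cdots\times\alpha\in\Irr(M)$ and verify the three required properties in turn.

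First I would set $C=\cent{G}{M}$. Since $S$ is nonabelian simple, $Z(S)=1$, so $Z(M)=1$ and $M\cap C=1$; consequently $G/C$ embeds in $\Aut(M)=\Aut(S)\wr \fS_n$, with $M$ identified naturally with $MC/C$. Because $\alpha$ extends to $\Aut(S)$, in particular $\alpha$ is $\Aut(S)$-invariant, and since every coordinate of $\gamma$ is the same character, the wreath-product action (diagonal automorphisms and permutations of the factors alike) fixes $\gamma$. Hence $\gamma$ is $G$-invariant, so $|G:G_\gamma|=1$ is trivially a $p'$-number; and $\gamma(1)=\alpha(1)^n$ is a $p'$-number with $\gamma\neq 1_M$.

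For the extendability, I would use the tensor-power construction. Let $V$ be a representation of $\Aut(S)$ affording an extension $\tilde\alpha$ of $\alpha$. On $V^{\otimes n}$, the base group $\Aut(S)^n$ acts coordinatewise via $\tilde\alpha$, while $\fS_n$ acts by permuting tensor factors; these two actions are compatible with the semidirect-product relations, so $V^{\otimes n}$ carries a representation of $\Aut(S)\wr\fS_n=\Aut(M)$. Its character restricts to $M$ as $\gamma$, and since $\gamma$ is irreducible this extension is irreducible as well. Restricting the resulting extension to the subgroup $G/C\leq\Aut(M)$ and then inflating along $G\twoheadrightarrow G/C$ (legitimate because $M\cap C=1$, so the inflation restricted to $M$ recovers $\gamma$) yields the desired extension of $\gamma$ to $G=G_\gamma$.

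The main technical point is the wreath-product extension step: one must check that $V^{\otimes n}$ genuinely carries a representation of $\Aut(S)\wr\fS_n$ and that its character restricts to $\gamma$ on $M$. This is a classical construction, but it is the only place where something beyond formal manipulation is required. With that in hand, and with the Navarro--Tiep input supplying $\alpha$, the remainder of the argument is routine Clifford-theoretic bookkeeping.
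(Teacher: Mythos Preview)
Your argument is correct. The paper itself does not give a proof but simply cites Theorem~2.1 of \cite{hun} (which in turn rests on \cite{nt}); what you have written is precisely the standard unpacking of that citation: take the Navarro--Tiep character $\alpha\in\Irr_{p'}(S)$ extending to $\Aut(S)$, form $\gamma=\alpha^{\times n}$, observe it is $\Aut(M)$-invariant, and extend via the tensor-power representation of $\Aut(S)\wr\fS_n=\Aut(M)$ followed by restriction to $G/C$ and inflation to $G$. In fact you obtain the slightly stronger conclusion $G_\gamma=G$, so $|G:G_\gamma|=1$, which of course suffices.
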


\begin{proof}
This follows from Theorem 2.1 of \cite{hun}, which in turn follows from work in \cite{nt}.
\end{proof}

Recall that the socle $\Soc(G)$ of a finite group $G$ is the product of the minimal normal subgroups of $G$. It is well-known that $\Soc(G)=A(G)\times T(G)$ where $A(G)$ is the direct product of \textit{some} of the elementary abelian minimal normal subgroups of $G$ and $T(G)$ is the direct product of \textit{all} the nonabelian minimal normal subgroups of $G$ (see Definition 42.6 and Lemma 42.9 of \cite{hup2}, for instance). Notice also that if $\chi\in\Irr(G)$ then $\chi$ is faithful if and only if $\Ker\chi\cap\Soc(G)=1$.

Now we are ready to complete the proof of Theorem A. As usual, if $p$ is a prime $O_{p'}(G)$ is the largest normal $p'$-subgroup of $G$ and $O_{p',p}(G)$ is the preimage in $G$ of the largest normal $p$-subgroup of $G/O_{p'}(G)$.

\begin{thm}
\label{smaller}
Let $G$ be a finite group and $\chi\in\Irr_{p'}(G)$. Write $K=\Ker\chi$. If $K\not\leq O_{p',p}(\Sol(G))$ then there exists $\psi\in\Irr_{p'}(G)$ such that $\Ker\psi<K$ and $\psi(1)>\chi(1)$.
\end{thm}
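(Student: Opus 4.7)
I would split on whether $K$ is $p$-solvable. If $K$ is $p$-solvable, then $K$ cannot also be $p$-nilpotent: a $p$-nilpotent $p$-solvable group is solvable (its normal Hall $p'$-subgroup is $p$-solvable of $p'$-order, hence solvable), and a solvable $p$-nilpotent normal subgroup of $G$ is contained in $O_{p',p}(\Sol(G))$, contradicting the hypothesis $K\not\leq O_{p',p}(\Sol(G))$. In that situation Theorem~\ref{B} supplies the required $\psi$ directly. The rest of the argument therefore addresses the case when $K$ is not $p$-solvable.

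Proceeding by induction on $|G|$, let $M$ be a minimal normal subgroup of $G$ contained in $K$ (which exists since $K\neq 1$). If $M$ is abelian, hence solvable, then $K/M$ is still not $p$-solvable, so in particular $K/M\not\leq \Sol(G/M)\supseteq O_{p',p}(\Sol(G/M))$; viewing $\chi$ as an irreducible $p'$-character of $G/M$ (valid as $M\leq K$), the inductive hypothesis supplies a character that lifts to the required $\psi$ on $G$. Otherwise $M=S^n$ for some non-abelian simple $S$, and Lemma~\ref{minsim} yields a non-trivial $\gamma=\alpha\times\cdots\times\alpha\in\Irr(M)$ of $p'$-degree with stabilizer $H=G_\gamma$ of $p'$-index in $G$ and an extension $\tilde\gamma\in\Irr(H)$. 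Since $\chi(1)$ is coprime to $p$, the restriction $\chi_H$ has an irreducible constituent $\chi_0$ of $p'$-degree; because $M\leq H\cap K\leq\Ker\chi_0$, this $\chi_0$ factors through $H/M$, so Gallagher's theorem makes $\chi_0\tilde\gamma$ an irreducible character of $H$, and the Clifford correspondence makes $\psi:=(\chi_0\tilde\gamma)^G$ an irreducible character of $G$ lying over $\gamma$.

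The degree comparison is routine: $\psi(1)=[G:H]\,\chi_0(1)\,\gamma(1)$ is a $p'$-number, Frobenius reciprocity gives $[G:H]\chi_0(1)\geq\chi(1)$ since $\chi$ is a constituent of $(\chi_0)^G$, and $\gamma(1)=\alpha(1)^n>1$ because $\alpha$ is a non-trivial irreducible of the non-abelian simple $S$; hence $\psi(1)>\chi(1)$. Also $\gamma$ is faithful and appears as a constituent of $\psi|_M$, so $M\not\leq\Ker\psi$ while $M\leq K$, giving $\Ker\psi\neq K$. The main obstacle I anticipate is the containment $\Ker\psi\leq K$. My plan is to use the identity $\Ker\psi=\bigcap_{g\in G}(\Ker(\chi_0\tilde\gamma))^g$ for the kernel of an induced irreducible character: since $\chi_0$ is trivial on $H\cap K$ and $\gamma$ is faithful on $M$, one computes $\Ker(\chi_0\tilde\gamma)\cap K=(H\cap K)\cap\Ker\tilde\gamma\subseteq K$, so the only possible obstruction comes from phase cancellation in $Z(\chi_0)\cap Z(\tilde\gamma)$ outside $K$. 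Eliminating such elements from the $G$-core, presumably by exploiting the freedom to choose $\chi_0$ among the $p'$-degree constituents of $\chi_H$ (whose common kernel $\bigcap_i\Ker\chi_i$ equals $H\cap K$ since $\chi$ is faithful on $G/K$), is the delicate technical point on which the whole argument hinges.
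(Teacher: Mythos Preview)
Your overall architecture matches the paper's: dispose of the case of a $p$-solvable kernel via Theorem~\ref{B}, then for non-$p$-solvable $K$ use Lemma~\ref{minsim} to find $\gamma$, extend it, multiply by a suitable character lying under $\chi$, and induce. The degree inequality and the fact that $M\not\leq\Ker\psi$ go through exactly as you say.

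The gap you flag at the end is genuine, and your proposed fix does not work. Take $G=M\times C$ with $M$ non-abelian simple and $C$ cyclic of prime order $q\neq p$; let $\chi=1_M\times\lambda$ for a non-trivial $\lambda\in\Irr(C)$, so $K=M$. Here $H=G_\gamma=G$, so $\chi_H=\chi$ is already irreducible and there is \emph{no freedom whatsoever} in the choice of $\chi_0$. Any extension of $\gamma$ to $G$ has the form $\tilde\gamma=\gamma\times\mu$ for some $\mu\in\Irr(C)$, giving $\psi=\gamma\times\lambda\mu$ with $\Ker\psi=1\times\Ker(\lambda\mu)$; this equals $C\not\leq K$ precisely when $\mu=\lambda^{-1}$. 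So the freedom that actually matters is in the extension $\tilde\gamma$, not in $\chi_0$, and Lemma~\ref{minsim} gives you no control over which extension you get.

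The paper closes this gap by aiming for the stronger conclusion $\Ker\psi=1$. First it reduces (via a minimal counterexample) to the situation where $M$ is the \emph{unique} minimal normal subgroup of $G$ contained in $K$; writing $\Soc(G)=M\times L$, this forces $K\cap L=1$, so $\chi_L$ is faithful. Then, rather than multiplying $\tilde\gamma$ by an arbitrary $p'$-constituent of $\chi_H$, the paper works inside $T=G_{\gamma\times\beta}$, where $\beta\in\Irr(L)$ lies under $\chi$: it takes $\tilde\gamma\in\Irr(T)$ extending $\gamma\times 1_L$ and $\varphi\in\Irr(T\mid 1_M\times\beta)$ of $p'$-degree lying under $\chi$, so that $\delta=\tilde\gamma\varphi$ lies over $\gamma\times\beta$ and $\psi=\delta^G$ is irreducible by Clifford. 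Now $\psi$ lies over the faithful $\gamma\in\Irr(M)$ \emph{and} over $\beta$; hence $\psi_L$ has the same set of constituents as $\chi_L$ and is therefore faithful as well. Together these give $\Ker\psi\cap\Soc(G)=1$, so $\psi$ is faithful. The essential idea missing from your sketch is this simultaneous control of $\psi$ on the complement $L$ of $M$ in the socle.
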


\begin{proof}
Let $G$ be a minimal counterexample. By Theorem \ref{B} we may assume that $K$ is not solvable. Let $M\leq K$ be the solvable residual of $K$ and let $M/N$ be a chief factor of $G$. Notice that $M/N$ is not solvable and by the minimality of $G$, $N=1$ and $M$ is a minimal normal subgroup of $G$. Moreover $M$ is the unique minimal normal subgroup of $G$ contained in $K$. Indeed, suppose that $M_1$ is another minimal normal subgroup of $G$ contained in $K$. Then $K/M_1$ is not solvable and we are done by the minimality of $G$ as a counterexample.

 Let $\gamma\in\Irr_{p'}(M)$ be the character whose existence is guaranteed by Lemma \ref{minsim}. Notice that there exists $L\trianglelefteq G$ such that $\Soc(G)=M\times L$. Let $\beta\in\Irr(L)$ lying under $\chi$. Notice that since $\chi$ has $p'$-degree, $\beta$ also has $p'$-degree.

Let $P\in\Syl_p(G)$. We know that $|G:G_{\gamma}|$ is a $p'$-number. Replacing $\gamma$ by a $G$-conjugate, if necessary, we may assume that $P\leq G_{\gamma}$. On the other hand,  since $\beta$ lies under $\chi$ and $\chi$ has $p'$-degree, $|G:G_{\beta}|$ is a $p'$-number too (by Theorem 6.11 of \cite{isa}). Again, replacing $\beta$ by a $G$-conjugate if necessary, we may assume that $P\leq G_{\beta}$. Thus $$P\leq G_{\gamma}\cap G_{\beta}=G_{\gamma\times\beta}=T.$$
This implies that $|G:T|$ is a $p'$-number.

 Since $\gamma$ extends to $G_{\gamma}$, $\gamma\times1_L$ also extends to $G_{\gamma}$ and there exists $\tilde{\gamma}\in\Irr(T|\gamma\times1_L)$ that extends $\gamma$. Since $M\leq\Ker\chi$ and $\chi$ lies over $\beta$, $\chi$ lies over $1_M\times\beta$. Let $\xi\in{\rm Irr}(G_{1_M\times\beta}|1_M\times\beta)$ such that $\xi^G=\chi$. Then $\xi$ has $p'$-degree and there exists $\varphi\in\Irr(T|1_M\times\beta)$ lying under $\xi$ (and hence, under $\chi$) of $p'$-degree. Hence $\delta=\tilde{\gamma}\varphi\in\Irr(T)$ by Gallagher's theorem (Corollary 6.16 of \cite{isa}). Since $\tilde{\gamma}$ lies over $\gamma\times1_L$ and $\varphi$ lies over $1_M\times\beta$, $\delta$ lies over
  $\gamma\times\beta$.  By Clifford's correspondence, $\psi=\delta^G\in\Irr(G)$. Notice that
$$
\psi(1)=|G:T|\gamma(1)\varphi(1)>|G:T|\varphi(1)\geq\chi(1).
$$
In particular, $\psi$ has $p'$-degree.

Now we want  to show that $\Ker\psi\cap\Soc(G)=1$. ( As remarked before the proof, this implies that $\psi$ is faithful.) Notice that it suffices to show that $\Ker\psi\cap M=1$ and $\Ker\psi\cap L=1$. Indeed, suppose that  $\Ker\psi\cap M=1$, $\Ker\psi\cap L=1$ and that $\Ker\psi\cap\Soc(G)>1$, and let $N$ be a minimal normal subgroup of $G$ contained in $\Ker\psi\cap\Soc(G)$. If $N$ is abelian, then $N\leq A(G)\leq L$ and $\Ker\psi\cap L>1$, a contradiction. If $N$ is not abelian, then either $N=M$ and $\Ker\psi\cap M=1$ or, $N\leq L$ and $\Ker\psi\cap L>1$. In both cases we get a contradiction and it follows that $\Ker\psi\cap\Soc(G)=1$.

Notice that $\psi$ lies over $\gamma$ and $\gamma$ is faithful. This implies that $\Ker\psi\cap M=\Ker \psi_M\sbs\Ker\gamma=1$.

Finally, it remains to see that $\Ker\psi\cap L=1$. Recall that $\chi$ lies over $\beta$, so $\chi_L$ is a multiple of the sum of  $|G:G_{\beta}|=r$ $G$-conjugates $\{\beta_1,\dots,\beta_r\}$ of $\beta$. Similarly, $\psi$ lies over $\beta$ so $\psi_L$ is also a multiple of the sum of the $G$-conjugates of $\beta$. Since $K=\Ker\chi$ and $K\cap L=1$ (because $M\cap L=1$ and $M$ is the unique minimal normal subgroup of $G$ contained in $K$),  $\chi_L$ is faithful. We conclude that $\psi_L$ is faithful. This means that $\Ker\psi\cap L=1$, as wanted. This concludes the proof.
\end{proof}

\section{Application to $p$-parts of character codegrees}

There have been many recent results on character codegrees. Most of these results are motivated by known results on character degrees. It is easy to see that for any group $G$ and any $\chi\in\Irr(G)$, $\chi(1)\leq\cod(\chi)$ (see, for instance, Lemma 2.1 of \cite{dl}). Therefore, some of these relations between results on character degrees and results on character codegrees are not surprising. For instance, a codegrees version of Jordan's theorem on linear groups (Theorem 14.12 of \cite{isa}) follows immediately. However, if we take a prime $p$, then it is possible to have $\cod(\chi)_p=p$ and $\chi(1)_p$ arbitrarily large. For instance,  let $n$ be any positive integer, let $q\equiv1\pmod{p}$ where $p$ and $q$ are primes and let $G$ be a direct product of  $n$ copies of a Frobenius group with cyclic complement of order $p$ cyclic kernel of order $q$. Then it is easy to see that $G$ possesses irreducible characters of degree $p^n$ but $\cod(\chi)_p\leq p$ for all $\chi\in\Irr(G)$. This shows that in general it is not possible to control the $p$-parts of character degrees in terms of the $p$-parts of character codegrees. Conversely, if $\chi$ is a faithful linear character of a cyclic group of order $p^n$, then $\cod(\chi)(1)=p^n$, so there is not any relation between $p$-parts of character degrees and character codegrees. Our goal in this section is to, quite surprisingly,  obtain results on  how $p$-parts of character codegrees restrict the structure of a group that are very similar to known results on $p$-parts of character degrees.

 It was conjectured in \cite{mor03} that $|G:O_p(G)|_p$ is bounded in terms of the largest $p$-part of the character degrees of $G$. This conjecture was proved for solvable groups in Corollary B of \cite{mw} and for arbitrary groups in Theorem A of \cite{yq}. The natural question, therefore, is whether such a bound exists if we replace character degrees by character codegrees. However, the examples mentioned in the previous paragraph show that there is no hope to obtain any bound for   $|G:O_p(G)|_p$ in terms of the largest $p$-part of the character codegrees.  Corollary C shows that, in fact, $|O_{p',p}(\Sol(G))|_p$ is the only factor of $|G|_p$ that cannot be bounded in terms of the largest $p$-part of the character codegrees.

 \begin{proof}[Proof of Corollary C]
 By Theorem A, there exists $\mu\in\Irr_{p'}(G)$ such that $\Ker\mu\leq O_{p',p}(\Sol(G))$. Hence
 $$
 p^a\geq\cod(\mu)_p=\left(\frac{|G:\Ker\mu|}{\mu(1)}\right)_p
=|G:\Ker\mu|_p\geq|G: O_{p',p}(\Sol(G))|_p,
 $$
 as desired.
 \end{proof}

 The following consequence of Corollary C has a very similar character degree analog (see Corollary B of \cite{mw} and Theorem 3.12 of \cite{yq}). We need a lemma first.

\begin{lem}
\label{pgr}
Let $G>1$ be a $p$-group. Assume that $p^a$ is the largest character codegree of $G$. Then
$\dl(G)\leq\log_2a+2$.
\end{lem}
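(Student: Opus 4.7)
My plan is to deduce the lemma from the corresponding bound on character degrees of $p$-groups, namely the classical statement that a $p$-group of derived length $k \ge 2$ admits an irreducible character of degree at least $p^{2^{k-2}}$. The bridge from degrees to codegrees is the elementary inequality
\[
\cod(\chi) \;=\; \frac{[G:\Ker\chi]}{\chi(1)} \;\geq\; \frac{[G:\zent\chi]}{\chi(1)} \;\geq\; \chi(1),
\]
valid for every irreducible character $\chi$ of every finite group. The first inequality uses $\Ker\chi \le \zent\chi$, and the second uses $\chi(1)^2 \le [G:\zent\chi]$, which follows from Frobenius reciprocity: writing $\chi_{\zent\chi} = \chi(1)\lambda$ for a linear character $\lambda$ of $\zent\chi$, we have $[\chi,\lambda^G] = \chi(1)$, hence $\chi(1)^2 \le \lambda^G(1) = [G:\zent\chi]$. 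Consequently, the largest character degree $b(G) = p^b$ of our $p$-group satisfies $b \le a$, so the degree bound $b \ge 2^{\dl(G)-2}$ would immediately force $a \ge 2^{\dl(G)-2}$, i.e.\ $\dl(G) \le \log_2 a + 2$.

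The degree bound $b(G) \ge p^{2^{k-2}}$ for $k := \dl(G) \ge 2$ I would prove by induction on $k$. For $k = 2$ it is immediate: a nonabelian $p$-group has a nonlinear irreducible character of degree at least $p$. For $k \ge 3$, set $N := G^{(k-1)}$, which is a nontrivial abelian normal subgroup of $G$. Since $G/N$ has derived length $k-1$, the inductive hypothesis supplies $\chi_1 \in \Irr(G/N) \subseteq \Irr(G)$ of degree at least $p^{2^{k-3}}$. I would then pick a nontrivial $\lambda \in \Irr(N)$, let $T := G_\lambda$ be its inertia subgroup in $G$, take $\psi \in \Irr(T\,|\,\lambda)$ so that $\chi_2 := \psi^G \in \Irr(G)$ lies over $\lambda$, and form the product character $\chi_1|_T \cdot \psi$ on $T$. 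Inducing this product up to $G$ yields a character of degree $\chi_1(1)\chi_2(1) \ge p^{2 \cdot 2^{k-3}} = p^{2^{k-2}}$, and the task becomes showing that some irreducible constituent of $(\chi_1|_T \cdot \psi)^G$ attains this degree.

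The main obstacle is this \emph{doubling}: arranging that the induced product character contains an irreducible constituent of the full predicted degree $p^{2^{k-2}}$, rather than breaking into strictly smaller pieces. A careful choice of $\lambda$ (so that its $G$-orbit is sufficiently large and $\chi_1|_T$ behaves well) combined with a Mackey-theoretic analysis should supply the required irreducibility; this is where the fine structure of the commutator $N = [G^{(k-2)}, G^{(k-2)}]$ genuinely enters the argument, and is the technical heart of the inductive step. If instead this character-degree bound is invoked as a black box from the literature on $p$-groups, the lemma follows at once from the inequality $\cod(\chi) \ge \chi(1)$ established above.
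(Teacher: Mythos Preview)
Your reduction to the inequality $\cod(\chi)\ge\chi(1)$ is fine, but the ``classical statement'' you invoke---that a $p$-group of derived length $k\ge 2$ has an irreducible character of degree at least $p^{2^{k-2}}$, equivalently $\dl(G)\le \log_2(\log_p b(G))+2$---is not classical, and to my knowledge is not known at all. The standard bound for $p$-groups comes from Taketa's inequality $\dl(G)\le |\cd(G)|$ together with $|\cd(G)|\le 1+\log_p b(G)$, giving only the \emph{linear} estimate $\dl(G)\le 1+\log_p b(G)$; an exponential improvement of the kind you need would be a striking theorem in its own right, not a black box. Your inductive sketch does not close the gap either: to pass from $p^{2^{k-3}}$ to $p^{2^{k-2}}$ you would need the second factor $\chi_2(1)=|G:T|\psi(1)$ to be at least $p^{2^{k-3}}$, but nothing forces the $G$-orbit of a nontrivial $\lambda\in\Irr(N)$ to be that large (indeed $\lambda$ might even extend to $G$, giving $\chi_2(1)=1$), and there is no mechanism in your outline linking the size of this orbit to the derived length of $G/N$. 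So the ``doubling'' step, which you correctly flag as the heart of the matter, is genuinely missing.

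The paper avoids this entirely by working with codegrees directly rather than passing through degrees. It quotes a result of Du and Lewis that bounds the \emph{nilpotence class} of a $p$-group in terms of the largest codegree: if $p^a$ is the maximal codegree and $a\ge 2$, then $c(G)\le 2a-2$. (This is a codegree-specific fact with no character-degree analogue of comparable strength; it is exactly what your reduction via $\cod(\chi)\ge\chi(1)$ throws away.) The standard inequality $\dl(G)\le \log_2 c(G)+1$ for nilpotent groups then gives $\dl(G)\le \log_2(2a-2)+1\le \log_2 a+2$, and the case $a=1$ is handled separately since then $G$ is abelian. In short: the route through degrees cannot reach a logarithmic bound with current technology, but the route through class---which is genuinely about codegrees---does.
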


\begin{proof}
By Lemma 2.4 of \cite{dl}, if $a=1$ then $G$ is abelian and the result holds.
If $a>1$, then we can apply Theorem 1.1 of \cite{dl} to deduce that the nilpotence class of $G$ is $c(G)\leq 2a-2$.  Now, by  III.2.12 of \cite{hup}, we have
$$
\dl(G)\leq\log_2c(G)+1\leq\log_2(2a-2)+1\leq\log_2(2a)+1\leq\log2a+2,
$$
as desired.
\end{proof}

\begin{cor}
Let $G$ be a finite group and let $P\in\Syl_p(G)$. Assume that $p^{a+1}$ does not divide $\cod(\chi)$ for every $\chi\in\Irr(G)$. Then
$$
\dl(P)\leq2\log_2a+3.
$$
In particular, if $G$ is $p$-solvable then $l_p(G)\leq 2\log_2a+3$.
\end{cor}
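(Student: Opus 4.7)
Let $P\in\Syl_p(G)$ and set $H=O_{p',p}(\Sol(G))$, $L=O_{p'}(\Sol(G))=O_{p'}(H)$, and $Q=P\cap H$. Then $Q$ is a Sylow $p$-subgroup of $H$, normal in $P$ because $H\trianglelefteq G$; since $L$ is a normal $p'$-subgroup of $H$ with $H/L$ a $p$-group, $Q\cong H/L$. My plan is to bound $\dl(Q)$ and $\dl(P/Q)$ separately and combine them via the standard inequality $\dl(P)\le\dl(Q)+\dl(P/Q)$ for $Q\trianglelefteq P$.

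The bound on $\dl(P/Q)$ is immediate: by Corollary C, $|P:Q|=|G:H|_p\le p^a$, so $P/Q$ is a $p$-group of order at most $p^a$ and hence of nilpotence class at most $a-1$. III.2.12 of \cite{hup} then gives $\dl(P/Q)\le\log_2(a-1)+1\le\log_2 a+1$.

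To bound $\dl(Q)$ I would apply Lemma \ref{pgr}, which requires showing that every codegree of $Q$ is at most $p^a$. Via the isomorphism $Q\cong H/L$, view $Q$ as the normal $p$-subgroup $H/L$ of $\overline{G}:=G/L$. Given $\lambda\in\Irr(Q)$, choose $\chi\in\Irr(\overline{G})$ lying over $\lambda$, and regard $\chi$ as an irreducible character of $G$ with $L$ in its kernel, so that $\cod(\chi)$ computed in $G$ or in $\overline{G}$ agrees and has $p$-part at most $p^a$ by hypothesis. Using the standard divisibility property of codegrees---if $N\trianglelefteq G$ and $\theta\in\Irr(N)$ is a constituent of $\chi_N$ then $\cod(\theta)\mid\cod(\chi)$---one gets $\cod(\lambda)\mid\cod(\chi)$, and since $\cod(\lambda)$ is a power of $p$ we conclude $\cod(\lambda)\le p^a$. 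Lemma \ref{pgr} then yields $\dl(Q)\le\log_2 a+2$, and combining with the bound on $\dl(P/Q)$ gives $\dl(P)\le 2\log_2 a+3$.

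For the $p$-solvable assertion the plan is to invoke the classical theorem of Bryukhanova (refining Hall--Higman) that $l_p(G)\le\dl(P)$ for $p$-solvable $G$, which immediately transfers the derived-length bound to $l_p(G)$. The main obstacle is the codegree divisibility statement invoked above: it holds via a Clifford-theoretic argument (reducing first to the $G$-invariant case via $\chi=\psi^{G}$ for $\psi\in\Irr(G_\theta\mid\theta)$, then using that the dimension of the projective representation of $G/N$ attached to the fully ramified case divides the index of its projective center) but needs either to be proved in a short lemma or cited from the codegree literature (related statements appear in references such as \cite{qz}).
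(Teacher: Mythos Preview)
Your proof is correct and follows essentially the same route as the paper: split $\dl(P)$ as $\dl(P/Q)+\dl(Q)$ with $Q=P\cap O_{p',p}(\Sol(G))$, bound the first summand via Corollary~C together with Huppert III.2.12, bound the second via Lemma~\ref{pgr}, and deduce the $p$-length statement from $l_p(G)\le\dl(P)$. If anything you are more careful than the paper, which applies Lemma~\ref{pgr} to $K=O_p(\Sol(G))$ without explicitly checking that the codegrees of $K$ are bounded by $p^a$; the codegree-divisibility fact $\cod(\theta)\mid\cod(\chi)$ for $\theta\in\Irr(N)$ under $\chi\in\Irr(G)$ with $N\trianglelefteq G$ that you single out is indeed standard in the codegree literature and is exactly what is needed.
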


\begin{proof}
We may assume that $O_{p'}(\Sol(G))=1$. Let $P\in\Syl_p(G)$.
Put $K=O_p(\Sol(G))$. Let $d=\dl(PK/K)$. By Corollary C and III.2.12 of \cite{hup}, we have that $$d\leq\log_2(a)+1$$ Now, it remains to notice that by Lemma \ref{pgr} $$\dl(K)\leq\log_2a+2$$ to deduce that
$$
\dl(P)\leq d+\dl(K)\leq 2\log_2a+3,
$$
as desired.

In order to obtain the bound for the $p$-length it suffices to use that if $G$ is $p$-solvable then $l_p(G)\leq\dl(P)$ (see IX.5.4 of \cite{hb} for $p$ odd  and \cite{bry} for $p=2$).
\end{proof}

We remark that
in Theorem 1.1 of \cite{ba}, it was proved that if $G$ is $p$-solvable, then $l_p(G)\leq a$.  Corollary C improves this bound asymptotically and extends it to arbitrary groups by considering the derived length of a Sylow subgroup.






\end{document}